\documentclass[12pt]{article}

\bibliographystyle{plain}

\usepackage{amsmath,amsthm,amssymb,graphicx,tikz,mathtools,enumitem}
\usepackage{tikz-cd}
\usepackage[margin=1in]{geometry}
\usepackage{mathrsfs}
\usepackage[all]{xy}
\usepackage{amsfonts}
\usepackage{latexsym}
\usepackage{subfigure}
\usepackage{fullpage}
\usepackage{float}
\usepackage{xypic}
\usepackage{bbm}
\usepackage{xcolor}

\usepackage[percent]{overpic}


 \DeclareMathOperator{\rk}{rk}

\DeclareMathOperator{\Kh}{Kh}
\DeclareMathOperator{\Lee}{Lee}

\DeclareMathOperator{\CKh}{CKh}

\DeclareMathOperator{\HFK}{HFK}
\DeclareMathOperator{\HFL}{HFL}
\DeclareMathOperator{\SFH}{SFH}
\DeclareMathOperator{\BS}{BS}

\DeclareMathOperator{\Spin}{Spin}

\newcommand{\Z}{\mathbb{Z}}

\newcommand{\Q}{\mathbb{Q}}


\newtheorem{thm}{Theorem}
\newtheorem{prop}[thm]{Proposition}

\newtheorem{cor}[thm]{Corollary}

\theoremstyle{definition}
\newtheorem{rem}[thm]{Remark}

\usepackage{hyperref}
\hypersetup{pdftex,colorlinks=true,allcolors=blue}

\begin{document}
\title{Khovanov homology detects T(2,6)}
\author{Gage Martin}
\date{}
\maketitle

\begin{abstract}
We show if $L$ is any link in $S^3$ whose Khovanov homology is isomorphic to the Khovanov homology of $T(2,6)$ then $L$ is isotopic to $T(2,6)$. We show this for unreduced Khovanov homology with $\mathbb{Z}$ coefficients.
\end{abstract}
\section{Introduction}

Khovanov homology is a combinatorially defined, bi-graded $R$ module $\Kh^{i,j}(L, R)$ which is associated to an oriented link $L \subseteq S^3$~\cite{khovanov_categorification_2000}. The graded Euler characteristic of Khovanov homology is the Jones polynomial. Many of the topological applications of Khovanov homology come from algebraic relationships to Floer homologies either implicitly through an analogy (e.g. the definition of the $s$-invariant) or explicitly through a spectral sequence.

In the spirit of finding connections between topological information and $\Kh^{i,j}(L, R)$ is the question of detection. Specifically, Khovanov homology is said to detect a link $L_0$ if given any link $L$ then $L$ is isotopic to $L_0$ if and only if $\Kh^{i,j}(L, R)\cong  \Kh^{i,j}(L_0, R)$. Kronheimer and Mrowka showed that Khovanov homology detects the unknot~\cite{kronheimer_khovanov_2011}. Khovanov homology is also known to detect the unlink~\cite{hedden_khovanov_2013}~\cite{batson_link-splitting_2015}, the Hopf link~\cite{baldwin_khovanov_2019}, the trefoil~\cite{baldwin_khovanov_2018}, the connected sum of two Hopf links~\cite{xie_classification_2019}, the torus link $T(2,4)$~\cite{xie_classification_2019}, and split links~\cite{lipshitz_khovanov_2019}.

In this paper we prove an additional detection result for Khovanov homology

\newtheorem*{detect}{Theorem~\ref{thm:DetectsT(2,6)}}
\begin{detect}
Let $L$ be a link with $\Kh(L,\Z) \cong \Kh(T(2,6),\Z)$. Then $L$ is isotopic to $T(2,6)$.
\end{detect}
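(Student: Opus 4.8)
The plan is to extract progressively stronger topological constraints on $L$ from the isomorphism $\Kh(L,\Z)\cong\Kh(T(2,6),\Z)$, passing from Khovanov homology to Floer-theoretic invariants, until the accumulated data leave $T(2,6)$ as the only possibility.

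First I would read off the elementary invariants. The isomorphism fixes the bigraded ranks of $\Kh(L)$, hence the Jones polynomial, the determinant $\det(L)=\det(T(2,6))=6$, and the total ranks of reduced and unreduced Khovanov homology over $\Q$ and $\bbF_2$. Since every nonzero group $\Kh^{i,j}(L)$ satisfies $j\equiv\ell\pmod 2$, where $\ell$ is the number of components, matching quantum gradings with $T(2,6)$ forces $\ell$ to be even; as the Lee homology has rank $2^{\ell}$ and is a subquotient of $\Kh(L,\Q)$, whose total rank is $8$, this pins down $\ell=2$. Finally $L$ is non-split, since Khovanov homology detects split links \cite{lipshitz_khovanov_2019} and $T(2,6)$ is non-split.

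Next I would convert this into geometric information via the Kronheimer--Mrowka spectral sequence from reduced Khovanov homology to reduced singular instanton homology \cite{kronheimer_khovanov_2011}: the now-known rank of $\widetilde{\Kh}(L)$ bounds the rank of $I^{\natural}(L)$, and matching against $T(2,6)$ should force degeneration, determining the instanton invariants of $L$ exactly. Feeding these into the sutured instanton homology of the complement and invoking that the instanton package detects the Seifert genus (Kronheimer--Mrowka) and detects fiberedness (Baldwin--Sivek), I would conclude that $L$ is a fibered $2$-component link whose fiber is a genus-$2$ surface with two boundary components --- precisely the fiber type of $T(2,6)$, whose Milnor fiber has first Betti number $5$.

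The last and hardest step is to upgrade ``fibered link of the correct fiber type'' to $L=T(2,6)$, since genus and fiberedness come nowhere near determining a link. Here I would bring in the double branched cover. The Ozsv\'ath--Szab\'o spectral sequence from $\widetilde{\Kh}(\overline{L},\bbF_2)$ to $\widehat{\HF}(\Sigma_2(L),\bbF_2)$, together with $|H_1(\Sigma_2(L))|=\det(L)=6$ and the inequality $\dim_{\bbF_2}\widehat{\HF}\ge|H_1|$, forces $\Sigma_2(L)$ to be an $L$-space with $H_1\cong\Z/6$, matching $\Sigma_2(T(2,6))=L(6,1)$. If this can be promoted to a homeomorphism $\Sigma_2(L)\cong L(6,1)$ and $L$ shown to be a $2$-bridge link, then Schubert's classification of $2$-bridge links by their double branched covers identifies $L$ with $\mathfrak{b}(6,1)=T(2,6)$. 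The main obstacle is exactly this promotion: ruling out the competing fibered links, and the competing $L$-spaces with $H_1=\Z/6$, that agree with $T(2,6)$ on all the coarser invariants above. I expect to need either a direct analysis of the monodromy of the fibration produced in the previous step --- pinning it down as the product of Dehn twists realizing the $2$-braid $\sigma_1^6$ --- or an annular/transverse refinement of the kind used in the $T(2,4)$ argument of Xie--Zhang \cite{xie_classification_2019}.
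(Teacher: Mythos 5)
Your proposal takes a genuinely different route from the paper, and it contains two genuine gaps, one of which you acknowledge yourself. Before naming them: your elementary first step is fine as far as it goes (quantum-grading parity plus the Lee rank bound does give two components), but it extracts strictly less than the paper does at the same stage. The paper also reads off the linking number ($\mathrm{lk}=3$, from the homological gradings in which Lee homology survives) and, via the Batson--Seed link-splitting spectral sequence combined with the unknot and trefoil detection theorems, shows that \emph{both components are unknots}. You recover neither fact, and your later steps never compensate for this. The first real gap is the instanton step: ``matching against $T(2,6)$ should force degeneration'' is an expectation, not an argument --- you give no bound forcing the Kronheimer--Mrowka spectral sequence to collapse, and even granting full collapse, deducing the fiber type of a \emph{two-component link} from instanton homology requires the Alexander-graded (sutured) structure of the invariant, not just its total rank; the genus/fiberedness detection results you invoke are stated for knots, and porting them to links is exactly the kind of work that cannot be waved through.

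The second gap is the one you flag: an L-space with $|H_1|=6$ need not be $L(6,1)$, and without a lens-space identification the Hodgson--Rubinstein/Schubert endgame never starts. Concretely, let $L'$ be the connected sum of a Hopf link and a trefoil: it is a non-split, fibered, two-component link with determinant $6$, reduced $\bbF_2$ Khovanov homology of rank $6$, and $\Sigma_2(L')\cong L(2,1)\,\#\,L(3,1)$, an L-space with $H_1\cong\Z/6$ that is not a lens space. Every filter in your chain that is actually justified (two components, non-split, determinant, reduced rank, L-space cover of order $6$) passes this link; the only things that would exclude it are the linking number and the fiber type, i.e.\ precisely the data your argument fails to establish. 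The paper's proof avoids all of this: once both components are known to be unknots with linking number $3$, Dowlin's spectral sequence from pointed Khovanov homology to knot Floer homology (using thinness to pin $\widehat{\HFK}(L)$ into a single $\delta$ grading) combined with the fact that link Floer homology detects braids --- rank $2^{l-1}$ in the extremal Alexander grading of an unknotted component --- shows one component is a $3$-braid closure in the complement of the other; the elementary enumeration of $3$-braids with unknot closure ($\sigma_1\sigma_2$, $\sigma_1^{-1}\sigma_2^{-1}$, $\sigma_1\sigma_2^{-1}$, the last giving L6a2, whose Khovanov homology has the wrong rank) then finishes the proof. If you want to salvage your outline, the missing ingredients are exactly a proof of degeneration in step two and a classification of L-spaces of order $6$ arising as double branched covers --- both of which look harder than the detection theorem itself.
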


The proof of Theorem~\ref{thm:DetectsT(2,6)} is similar in spirt to the proof in~\cite{xie_classification_2019} that Khovanov homology detects $T(2,4)$ but uses Dowlin's spectral sequence to knot Floer homology, rather than the Kronheimer-Mrowka spectral sequence to singular instanton Floer homology.

\subsection*{Acknowledgements}
The author would like to thank John Baldwin, Eli Grigsby, Siddhi Krishna, and Ian Montague for helpful conversations.

\section{Background}

\subsection{Khovanov homology}

Khovanov homology is a combinatorially defined invariant that assigns to an oriented link $L \subseteq S^3$ a bi-graded $R$-module $\Kh^{i,j}(L, R)$ which is the homology of a chain complex $\CKh(D)$ associated to a diagram $D$ for $L$~\cite{khovanov_categorification_2000}. The $i$ grading is the homological grading and the $j$ grading is the quantum grading.

A choice of a basepoint $p \in L$ defines an action of $R[X]/X^2 = 0$ on $\CKh(D)$. Quotienting by the image of this action and then taking homology gives rise to reduced Khovanov homology $\widetilde{\Kh}^{i,j}(L, R)$. The rank of $\widetilde{\Kh}^{i,j}(L, \mathbb{F}_2)$ is exactly half the rank of $\Kh^{i,j}(L, \mathbb{F}_2)$~\cite[Corollaries 3.2.B-C]{shumakovitch_torsion_2014}.

Multiple spectral sequences starting at Khovanov homology and converging to other homology theories have been constructed. We briefly recall the spectral sequences that will be needed in the proof of Theorem~\ref{thm:DetectsT(2,6)}.

Using a similar construction to Khovanov homology, Lee defined an invariant of an oriented link $L \subseteq S^3$ called Lee homology $\Lee^{i}(L, \Q)$, and from the construction there is a spectral sequence from $\Kh^{i,j}(L, \Q) $ to $\Lee^{i}(L, \Q)$. Lee showed that the total rank of $\Lee^{i}(L, \Q)$ for an $n$-component link $L$ is $2^n$ and showed an explicit bijection between generators of $\Lee^{i}(L, \Q)$ and choices of orientations of $L$. The homological gradings in which $\Lee^{i}(L, \Q)$ is non-zero are determined by the pairwise linking numbers of the different components of $L$~\cite{lee_endomorphism_2005}.

Batson-Seed constructed a link splitting spectral sequence from Khovanov homology. To simplify the exposition, we will restrict to the case that $L = K_1 \cup K_2$ is a 2-component link which is what is relevant to the proof of Theorem~\ref{thm:DetectsT(2,6)}. If $L$ has two components, then the Batson-Seed construction gives a spectral sequence from $\Kh^{i-j}(L, \mathbb{F}_2)$ to a homology theory $\BS^{i-j}(L,\mathbb{F}_2)$ where $\BS^{i-j+A}(L,\mathbb{F}_2) \cong \Kh^{i-j}(K_1 \sqcup K_2) \cong \Kh^{i-j}(K_1) \otimes \Kh^{i-j}( K_2)$ where $A$ is some overall shift determined by $L$~\cite{batson_link-splitting_2015}.   

Pointed Khovanov homology is a generalization of reduced Khovanov homology to a link $L$ with a set of base points $p_1 ,\ldots , p_m \in L$ and a corresponding action of $R[X_1, \ldots , X_m] / X_1^2 = \cdots = X_m^2 = 0$ on $\CKh(L)$ for each base point~\cite{baldwin_khovanov_2017}. Dowlin constructed a spectral sequence from relatively $\delta' = j - 2i$ graded pointed Khovanov homology to relatively $\delta' = 2M - 2A$ graded knot Floer homology~\cite{dowlin_spectral_2018}.

\subsection{Knot Floer homology, link Floer homology, and sutured Floer homology}

Knot Floer homology is an invariant that assigns to an oriented link $L \subseteq S^3$ a bi-graded $R$-module $\widehat{\HFK}(L)$. The two gradings are the Maslov grading $M$ and the Alexander grading~$A$~\cite{ozsvath_holomorphic_2004}.

Link Floer homology $\widehat{\HFL}(L)$ is a generalization of knot Floer homology which is graded by an Alexander grading $a_i$ for each component of $L$ in addition to the Maslov grading~$M$~\cite{ozsvath_holomorphic_2008}.

To recover knot Floer homology from link Floer homology for an $n$-component link $L$, start with $\widehat{\HFL}(L)$ and define a single Alexander grading as the sum over the Alexander gradings of all components, $A = \sum a_i$. Then $\widehat{\HFL}(L)$ graded by $A$ and $M + \frac{n - 1}{2}$    is isomorphic to $\widehat{\HFK}(L)$~\cite{ozsvath_holomorphic_2008}.

Sutured Floer homology $\SFH(M)$ is a version of Heegaard Floer homology defined for a balanced sutured manifold $M$. The homology $\SFH(M)$ splits over relative $\Spin^C$ structures on $M$~\cite{juhasz_holomorphic_2006}. 

 The complement of a link $S^3 \setminus L$ is naturally a balanced sutured manifold. The sutures are pairs of two oppositely oriented meridional sutures on each component of the boundary. The sutured Floer homology $\SFH(S^3 \setminus L)$ is isomorphic to $\widehat{\HFL}(L)$ with the relative $\Spin^C$ structures corresponding to the multi-Alexander gradings.

Given a properly embedded oriented surface with boundary $S$ in a sutured manifold $M$, which satisfies some technical conditions about how $\partial S$ intersects the the sutures of $M$, $S$ defines a sutured manifold decomposition from $M$ to $M' = M \setminus  \textrm{Int(N}(S)$. Juh\'{a}sz showed that $\SFH(M')$ is isomorphic to the direct summands of $\SFH(M)$ corresponding to ``outer" $\Spin^C$ structures~\cite{juhasz_floer_2008}.

\subsection{Link Floer homology detects braids} 

An argument similar in spirit to arguments in~\cite[Theorem~1.5]{juhasz_floer_2008} and~\cite{grigsby_sutured_2014} shows that link Floer homology detects braids in the complement of a fibered component. This braid detection result is known to some experts but the author is unaware of a proof in the literature so one is produced here. We provide a proof a more general statement is needed in the proof of Theorem~\ref{thm:DetectsT(2,6)}. A version of the following argument was communicated to the author by John Baldwin~\cite{BaldwinPersonalCommunication}. For a definition of a braid in the complement of a fibered knot refer to~\cite[Definition~1.2]{kawamuro_self-linking_2011}.

\begin{prop}
Suppose $L\subseteq M$ is a link with $l$ components with a fibered component $K$ and $M \setminus L$ is irreducible. Then $L \setminus K$ is a braid in the complement of $K$ if and only if $\widehat{\HFL(L)}$ has rank $2^{l-1}$ in the highest (and lowest) non-zero Alexander grading associated to $K$.
\end{prop}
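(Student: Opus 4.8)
The plan is to translate the statement into sutured Floer homology and then apply Juh\'asz's surface decomposition theorem. Recall from the background that $\widehat{\HFL}(L) \cong \SFH(M \setminus L)$, where the complement carries its natural sutured structure with two oppositely oriented meridional sutures on each boundary torus, and that the multi-Alexander gradings correspond to the splitting of $\SFH$ over relative $\Spin^C$ structures. Under this dictionary the summand of $\widehat{\HFL}(L)$ in the top Alexander grading associated to $K$ is precisely the sum of the ``outer'' $\Spin^C$ summands for a Seifert surface $R$ of $K$ in the link complement, so by Juh\'asz's theorem~\cite{juhasz_floer_2008} it is isomorphic to $\SFH(M')$, where $M'$ is obtained from $M \setminus L$ by decomposing along $R$. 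Since $K$ is fibered, its fiber $F$ is a Thurston-norm-minimizing Seifert surface, and I will take $R = F$. By the conjugation symmetry of link Floer homology the bottom grading has the same rank as the top, so it suffices to treat the top grading.

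For the forward direction, suppose $L \setminus K$ is a braid in the complement of $K$. Then $L \setminus K$ is transverse to the fibration of $M \setminus K$, meeting the fiber $F$ transversally, so decomposing $M \setminus K$ along $F$ carries $L\setminus K$ to a collection of boundary-parallel vertical arcs in the product sutured manifold $F \times I$. Thus $M'$ is the product $F \times I$ with these arcs drilled out, and the content of this direction is the identification of the resulting sutures together with the computation $\SFH(M') \cong \Z^{2^{l-1}}$. The product piece $F \times I$ contributes rank one by Juh\'asz's computation of $\SFH$ for product sutured manifolds; each of the $l-1$ components of $L \setminus K$ carries exactly two meridional sutures which, after the decomposition, become additional longitudinal sutures on the drilled tubes. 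Reducing to the local model of a solid torus with $2n$ longitudinal sutures, whose sutured Floer homology is $\Z^{2^{n-1}}$, each component contributes a single rank-doubling factor and the total rank is $2^{l-1}$. The Hopf link, with $K$ the axis and $L \setminus K$ a one-braid, is the model case: the decomposition produces a solid torus with four longitudinal sutures and $\SFH \cong \Z^{2}$, matching $2^{l-1}=2$.

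For the converse I will argue by contrapositive: if $L \setminus K$ is not a braid, then $\SFH(M')$ has rank strictly greater than $2^{l-1}$. Here the irreducibility hypothesis enters to guarantee that $M \setminus L$, hence $M'$, is taut, so that Juh\'asz's nonvanishing and product-detection results apply. Because $K$ is fibered, $F$ is its unique norm-minimizing Seifert surface, so the decomposition is forced to use $F$, and the question reduces to whether $L \setminus K$ meets the product $F \times I$ in a trivial, boundary-parallel tangle. If $L \setminus K$ cannot be isotoped transverse to the fibration, the drilled tangle is not boundary-parallel, so $M'$ fails to have the product-with-trivial-braid form that realizes the minimal rank, forcing $\SFH(M') > 2^{l-1}$.

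The hard part is this rigidity in the converse: that the value $2^{l-1}$ is attained \emph{only} when the decomposing surface is a fiber and the complementary tangle is trivial. This is a relative, braided analogue of Ni's theorem that knot Floer homology detects fibered knots by rank one in the top Alexander grading, and of Juh\'asz's characterization of product sutured manifolds by $\SFH \cong \Z$. Establishing that rank $2^{l-1}$ \emph{forces} the braid structure, rather than merely bounding it, is where the main work lies, and it is precisely here that the uniqueness of the fibered fiber surface must be combined with the sutured decomposition machinery; the delicate bookkeeping of how the meridional sutures on the components of $L\setminus K$ transform under decomposition is the technical crux shared by both directions.
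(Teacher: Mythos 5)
Your setup---translating to sutured Floer homology, decomposing the link complement along the fiber surface, and invoking Juh\'asz's decomposition theorem---matches the paper's opening moves, and your forward direction is essentially workable. But the converse is a genuine gap, and you concede as much: the step ``the drilled tangle is not boundary-parallel, so $M'$ fails to have the product-with-trivial-braid form \ldots forcing $\SFH(M') > 2^{l-1}$'' is an assertion, not an argument. Failing to be a product does not by itself force the rank to exceed any particular value; that implication is exactly the detection statement you would need to prove, and you explicitly defer it (``where the main work lies''). Your appeal to uniqueness of the norm-minimizing Seifert surface of a fibered knot does not fill this hole, because the issue is not which surface one decomposes along but what the rank hypothesis implies about the decomposed manifold.

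The missing idea, which is how the paper closes the loop, is a bookkeeping reduction that holds regardless of whether $L\setminus K$ is braided: the sutured manifold $N$ obtained by cutting $M\setminus L$ along $S\setminus L$ contains $l-1$ pairs of parallel sutures, one pair for each component of $L\setminus K$, and removing these superfluous pairs yields a sutured manifold $N'$ with $\rk(\SFH(N)) = 2^{l-1}\,\rk(\SFH(N'))$. This converts the hypothesis ``rank $2^{l-1}$ in the extreme Alexander grading'' into ``$\rk(\SFH(N')) = 1$,'' at which point Juh\'asz's results (\cite[Prop~9.4]{juhasz_holomorphic_2006}, \cite[Theorem~1.4]{juhasz_floer_2008}) give that rank one holds if and only if $N'$ is a product sutured manifold, which holds if and only if $L\setminus K$ is a braid in the complement of $K$. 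This single equivalence proves both directions at once, with no rigidity analysis of tangles in $F\times I$ needed; your ad hoc solid-torus computation in the forward direction is subsumed by the same factor-of-$2^{l-1}$ identity. Separately, you never justify that the top nonzero Alexander grading associated to $K$ is the one computed by the fiber-surface decomposition; the paper establishes this by adding handles to $S$ and observing that the resulting decompositions are not taut, so the corresponding gradings vanish (\cite[Prop~9.8]{juhasz_holomorphic_2006}). Without that step, even your forward direction computes the rank in a grading not yet identified with the extremal one.
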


\begin{proof}

Consider a fiber surface $S$ bounded by $K$ which intersects $L \setminus K$ minimally. Cutting open the sutured manifold $M\setminus L $ along $S \setminus L$ gives a new sutured manifold $N$. The sutured Floer homology of $N$ is isomorphic to the link Floer homology of $L$ supported in a constant $a_K$ grading of $\frac{1}{2} c(S,t) = \chi(S) + I(S) - r(S,t)$, where $a_K$ is the Alexander grading associated to $K$~\cite[Theorem~3.11]{juhasz_floer_2008}. For a definition of $\frac{1}{2} c(S,t)$, see~\cite[Definition~3.8]{juhasz_floer_2008}.

The sutured manifold $N$ contains $l-1$ pairs of parallel sutures corresponding to the base points on the components of $L\setminus K$. Removing these superfluous pairs of sutures gives a new sutured manifold $N'$ and $\rk(\SFH (N)) = 2^{l-1} \rk(\SFH(N'))$. Finally, $\rk(\SFH(N')) = 1$ if and only if $N'$ is a product sutured manifold~\cite[Prop~9.4]{juhasz_holomorphic_2006}~\cite[Theorem~1.4]{juhasz_floer_2008}. The manifold $N'$ is a product sutured manifold exactly when $L \setminus K$ is a braid in the complement of $K$.

To see that $c(S,t)$ is the lowest non-zero $a_K$ grading, consider increasing the genus of the Seifert surface for $K$ by adding $h$ handles to the genus $g$ surface $S$ in the complement of $L$ to obtain a new surface $S'$ of genus $g + h$. Then the sutured manifold obtained by cutting open along $S'$ is not taut if $h \geq 1$ so the link Floer homology in $a_U$ grading $\frac{1}{2}c(S',t)$ is zero~\cite[Prop~9.8]{juhasz_holomorphic_2006} and one can compute that $c(S',t) = c(S,t) - 2h$.

The rank in the lowest non-zero Alexander grading associated to $K$ is the same as the rank in the highest non-zero Alexander grading associated to $K$ because of the symmetry of Link Floer homology.
\end{proof}

Taking $L \subseteq S^3$ and the fibered knot to be the unknot gives the following corollary.

\begin{cor}\label{LFL:braids}
Suppose $L\subseteq S^3$ is a link with $l$ components with an unknotted component $U$ and each component of $L \setminus U$ has non-zero geometric linking with $U$. Then $L \setminus U$ is a braid in the complement of $U$ if and only if $\widehat{\HFL(L)}$ has rank $2^{l-1}$ in the highest (and lowest) non-zero Alexander grading associated to $U$.\end{cor}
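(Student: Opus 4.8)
The plan is to derive this corollary directly from the preceding proposition by specializing the ambient manifold and the fibered component. Specifically, I would take $M = S^3$ and let the fibered component $K$ be the unknotted component $U$. The unknot is fibered, with fiber surface a disk $D$ bounded by $U$, so the hypotheses of the proposition concerning a fibered component are satisfied. The conclusion of the proposition then reads: $L \setminus U$ is a braid in the complement of $U$ if and only if $\widehat{\HFL}(L)$ has rank $2^{l-1}$ in the highest (and lowest) non-zero Alexander grading associated to $U$, which is exactly the statement of the corollary.

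Two hypotheses of the proposition need to be checked in this specialized setting. First, the proposition requires $M \setminus L = S^3 \setminus L$ to be irreducible. I would verify that under the stated hypothesis that each component of $L \setminus U$ has non-zero geometric linking number with $U$, the complement $S^3 \setminus L$ is irreducible: any essential sphere would bound a ball on each side, but a component geometrically linking $U$ prevents such a splitting from separating $U$ from the rest in a way that produces a reducing sphere, so no component can be ``split off'' by an essential sphere. Second, the proof of the proposition invoked a fiber surface $S$ that intersects $L \setminus K$ minimally and required the cut-open sutured manifold to detect the braid condition; here the fiber surface is the disk $D$, and the non-zero geometric linking condition ensures that each component of $L \setminus U$ actually meets $D$, so that the minimal intersection number is positive and the product sutured manifold condition corresponds precisely to the braid condition.

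The main obstacle, and the reason the hypothesis on geometric linking is present, is ensuring that the translation ``$N'$ is a product sutured manifold if and only if $L \setminus K$ is a braid in the complement of $K$'' remains valid in this specialized setting. If some component of $L \setminus U$ were disjoint from the disk $D$ (equivalently had zero geometric linking with $U$), then that component would lie entirely in the complement of the fiber and could not be a braid with respect to $U$, yet the geometric reasoning in the proposition's proof implicitly assumed each component genuinely threads the fiber surface. The geometric linking hypothesis rules out this degenerate case and guarantees irreducibility of the complement, which is precisely what is needed to apply the proposition verbatim.

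In summary, the proof is a short specialization: set $M = S^3$ and $K = U$ in the proposition, observe that the unknot is fibered with disk fiber, check that the non-zero geometric linking hypothesis yields both irreducibility of $S^3 \setminus L$ and the nondegeneracy needed for the braid-versus-product-sutured-manifold correspondence, and then read off the conclusion. No new Floer-theoretic input is required beyond what the proposition already provides.
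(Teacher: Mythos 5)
Your proposal is correct and matches the paper's own derivation: the paper obtains the corollary exactly by specializing the proposition to $M = S^3$ with the fibered component taken to be the unknot $U$ (disk fiber). Your additional checks---that nonzero geometric linking forces irreducibility of $S^3 \setminus L$ and that every component meets the disk fiber---are exactly the role that hypothesis plays, and are left implicit in the paper.
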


\begin{rem}
For the case with the unknot, if $D$ intersects $L$ in $n$ points then a simple computation of $c(D,t)$ shows that the highest non-zero Alexander grading will be $n/2$.
\end{rem}

\section{Khovanov Homology detects T(2,6)}

\begin{table}[]
\centering
\begin{tabular}{|c|c|c|c|c|c|c|c|}
\hline
         & $i = 0$      & $i = 1$ & $i = 2$      & $i = 3$                    & $i = 4$      & $i = 5$                    & $i = 6$      \\ \hline
$j = 18$ &              &         &              &                            &              &                            & $\mathbb{Z}$ \\ \hline
$j = 16$ &              &         &              &                            &              & $\mathbb{Z}$               & $\mathbb{Z}$ \\ \hline
$j = 14$ &              &         &              &                            &              & $\mathbb{Z} / 2\mathbb{Z}$ &              \\ \hline
$j = 12$ &              &         &              & $\mathbb{Z}$               & $\mathbb{Z}$ &                            &              \\ \hline
$j = 10$ &              &         &              & $\mathbb{Z} / 2\mathbb{Z}$ &              &                            &              \\ \hline
$j = 8$  &              &         & $\mathbb{Z}$ &                            &              &                            &              \\ \hline
$j = 6$  & $\mathbb{Z}$ &         &              &                            &              &                            &              \\ \hline
$j = 4$  & $\mathbb{Z}$ &         &              &                            &              &                            &              \\ \hline
\end{tabular}
\caption{The Khovanov homology of the torus link T(2,6) computed using SageMath~\cite{sagemath}}\label{KhT(2,6)}
\end{table}

In this section, we show that Khovanov homology detects the torus link T(2,6). For reference, the Khovanov homology is shown in Table~\ref{KhT(2,6)}.

\begin{thm}\label{thm:DetectsT(2,6)}
Let $L$ be a link with $\Kh(L,\Z) \cong \Kh(T(2,6),\Z)$, then $L$ is isotopic to $T(2,6)$.
\end{thm}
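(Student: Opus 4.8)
The plan is to extract topological constraints from $\Kh(L,\Z)\cong\Kh(T(2,6),\Z)$ by running the three spectral sequences recalled above, and then to recognize $L$ via the braid–detection Corollary~\ref{LFL:braids}.

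First I would determine the number of components and the linking number using the Lee spectral sequence. Over $\Q$ the module $\Kh(L,\Q)$ has rank $8$, with free generators in bidegrees $(i,j)$ equal to $(0,4),(0,6),(2,8),(3,12),(4,12),(5,16),(6,16),(6,18)$. Since Lee's generators are indexed by orientations and always lie in even homological gradings, the two generators in odd homological gradings $i=3$ and $i=5$ must die in the spectral sequence, and the only bidegree-compatible differentials are the two length-one differentials $(2,8)\to(3,12)$ and $(4,12)\to(5,16)$. The generators $(0,4),(0,6)$ lie in the bottom homological grading and must survive, so $\dim_\Q\Lee(L)=4$ and hence $L$ has two components $K_1,K_2$. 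The surviving generators lie in homological gradings $0$ and $6$, so by Lee's linking-number formula $lk(K_1,K_2)=\pm 3$; I fix orientations so that the linking number is $3$.

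Next I would pin down the Floer-theoretic data. Over $\mathbb{F}_2$ one computes $\dim_{\mathbb{F}_2}\Kh(L,\mathbb{F}_2)=12$ and $\dim_{\mathbb{F}_2}\widetilde{\Kh}(L,\mathbb{F}_2)=6$. The crucial structural feature is that $\Kh(T(2,6),\Z)$ is supported on only the two $\delta'=j-2i$ diagonals $\delta'=4$ and $\delta'=6$; this near-thinness should allow me to control Dowlin's spectral sequence from pointed Khovanov homology to $\widehat{\HFK}(L)$, since the only possible differentials run between these two diagonals. Ruling out the higher differentials and matching the outcome against the Euler-characteristic constraint would then determine the $(M,A)$-graded ranks of $\widehat{\HFK}(L)$, and via $\SFH(S^3\setminus L)\cong\widehat{\HFL}(L)$ the Alexander-graded ranks of $\widehat{\HFL}(L)$. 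In parallel I would feed $\Kh(L,\mathbb{F}_2)$ into the Batson–Seed spectral sequence, whose $E_\infty$ page is $\Kh(K_1)\otimes\Kh(K_2)$; combining the resulting inequality $\dim_{\mathbb{F}_2}\Kh(K_1)\cdot\dim_{\mathbb{F}_2}\Kh(K_2)\le 12$ with the linking-number information and unknot detection (Kronheimer–Mrowka) should force $\dim_{\mathbb{F}_2}\Kh(K_i)=2$, so that each $K_i$ is an unknot.

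Finally I would apply Corollary~\ref{LFL:braids} to the unknotted component $U=K_1$, whose geometric linking with $K_2$ is nonzero since $lk=3$: the computed rank of $\widehat{\HFL}(L)$ in the top Alexander grading associated to $U$ equals $2^{l-1}=2$, so $L\setminus U$ is a braid in the complement of $U$. By the remark following the corollary the top Alexander grading is $n/2$, where $n$ is the geometric intersection of a disk bounded by $U$ with $L\setminus U$, and together with $lk(U,K_2)=3$ this fixes the number of braid strands; the remaining link Floer data (in particular the genus and fiberedness information carried by the top Alexander grading) should then single out $T(2,6)$ among braided links with these invariants. I expect the main obstacle to be the control of Dowlin's spectral sequence precisely enough to \emph{compute} $\widehat{\HFL}(L)$ rather than merely bound its rank — the two-diagonal support limits but does not immediately kill the possible differentials — with the final identification of the braid as $T(2,6)$, once the full link Floer homology and braid structure are in hand, as the second delicate point.
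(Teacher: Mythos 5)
Your outline coincides with the paper's skeleton (Lee spectral sequence to get two components and linking number $3$; Batson--Seed to identify the components; Dowlin's spectral sequence plus Corollary~\ref{LFL:braids} to get braidedness; then identify the braid), but the two steps that constitute the real content of the paper's proof are missing or asserted rather than proved. First, a smaller gap: the Batson--Seed constraint $\dim\Kh(K_1,\mathbb{F}_2)\cdot\dim\Kh(K_2,\mathbb{F}_2)\le 12$, together with the fact that each factor is twice an odd number, does \emph{not} force both factors to equal $2$ --- the product $2\cdot 6$ is also allowed, i.e.\ an unknot together with a trefoil (by trefoil detection). The paper rules this out by comparing the $i-j$ graded ranks of $\Kh(L,\Q)$, which Batson--Seed preserves up to an overall shift, against those of $\Kh(U)\otimes\Kh(T)$; your ``should force'' skips this, and linking-number information alone does not supply it.

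The central gap is in the braidedness step. You plan to ``compute'' $\widehat{\HFK}(L)$, and from it $\widehat{\HFL}(L)$, by controlling the Dowlin differentials, and then you simply assert that the rank in the top Alexander grading of $U$ ``equals $2$.'' This cannot work as stated: Dowlin's spectral sequence starts at \emph{pointed} Khovanov homology (not the unreduced theory whose two-diagonal support you invoke), it only preserves the \emph{relative} $\delta'$ grading, and it converges to the singly-graded $\widehat{\HFK}(L)$, which does not remember the multi-Alexander grading $(a_1,a_2)$ needed for Corollary~\ref{LFL:braids}. What the spectral sequence actually yields is only that $\widehat{\HFL}(L)$ is supported in a single $\delta$ grading (known up to an overall shift) and has total rank at most $12$. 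The paper then has to reconstruct the possible $(M,a_1,a_2)$-graded shapes of $\widehat{\HFL}(L)$ by hand: the unknotted components give spectral sequences $\widehat{\HFL}(L)\Rightarrow\widehat{\HFL}(K_i)\otimes V$ placing generators in gradings involving an unknown shift $x$ (unknown precisely because the $\delta'$ grading is only relative), and the symmetry $\widehat{\HFL}_{M}(L,a_1,a_2)\cong\widehat{\HFL}_{M-a_1-a_2}(L,-a_1,-a_2)$ plus parity constraints then force, through a seven-case analysis on $x$, rank exactly $2$ in the top Alexander grading. That case analysis is the bulk of the proof and is absent from your proposal. Finally, the endgame also needs an actual argument: a $3$-braid with unknot closure is, up to isotopy in the complement of the axis, one of $\sigma_1\sigma_2$, $\sigma_1^{-1}\sigma_2^{-1}$, or $\sigma_1\sigma_2^{-1}$; the first two closures union the axis give $T(2,6)$, and the third gives L6a2, which is excluded because its Khovanov homology has a different rank. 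Your appeal to ``genus and fiberedness information'' in the top Alexander grading does not substitute for this classification.
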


Theorem~\ref{thm:DetectsT(2,6)} follows from the two propositions below.

\begin{prop}\label{FirstProperties} 
If $\Kh(L,\Z) \cong \Kh(T(2,6),\Z)$, then $L$ is a 2-component link with linking number $3$ and each of the components is an unknot.
\end{prop}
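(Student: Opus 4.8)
The plan is to extract all the claimed topological information from the graded module $\Kh(T(2,6),\Z)$ using the standard spectral sequences and rank relations recalled in the Background section. The key observation is that Table~\ref{KhT(2,6)} has total rank $10$ over $\Z$, with all torsion concentrated in two $\Z/2\Z$ summands, so the free rank is $8 = 2^3$ and the $\mathbb{F}_2$-rank is $12$ by the universal coefficient theorem.

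First I would determine the number of components. Lee's theorem states that the total rank of Lee homology equals $2^n$ for an $n$-component link, and the Lee spectral sequence from $\Kh(L,\Q)$ converges to it. Over $\Q$ the Khovanov homology of $T(2,6)$ has rank $8 = 2^3$, and since Lee homology is a subquotient its rank is at most $8$ and must itself be a power of $2$. One then needs to rule out $n=1$ (rank $2$) and $n\geq 4$. Here the homological gradings in which the Lee generators sit are governed by pairwise linking numbers, and the support of $\Kh(T(2,6),\Q)$ spans homological gradings $i=0$ through $i=6$; I would argue that a $1$-component link forces Lee homology concentrated in a single grading, contradicting the spread, while the precise distribution of the eight surviving classes across the homological gradings pins down $n=2$ together with the linking number. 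Specifically, for a two-component link the four orientation classes land in gradings determined by $2\,\mathrm{lk}(K_1,K_2)$, and matching this against the gradings $0$ and $6$ where the free part of $\Kh$ is supported yields $\mathrm{lk}=3$.

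Next I would identify the two components as unknots. For this I would invoke the Batson--Seed splitting spectral sequence, which for a $2$-component link runs from $\Kh(L,\mathbb{F}_2)$ to $\Kh(K_1)\otimes\Kh(K_2)$ up to an overall grading shift. Since the $\mathbb{F}_2$-rank of $\Kh(T(2,6))$ is $12$, the target has rank at most $12$, hence $\rk\Kh(K_1,\mathbb{F}_2)\cdot\rk\Kh(K_2,\mathbb{F}_2)\leq 12$. Each component is a knot, so each factor has $\mathbb{F}_2$-rank at least $2$, and unknot detection (Kronheimer--Mrowka) says rank exactly $2$ characterizes the unknot among the factors; I would combine the rank bound with the parity and size constraints to force each factor to have rank $2$, so both $K_1$ and $K_2$ are unknots.

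The main obstacle will be the bookkeeping in the Lee-homology step: extracting the component count and linking number simultaneously requires carefully reading off which homological gradings support the surviving Lee classes, and correctly applying Lee's formula relating those gradings to linking numbers, rather than merely counting total rank. A subtle point is ensuring the Lee spectral sequence does not collapse ambiguously—one must confirm the full rank $8$ survives (equivalently, that $n=3$ is excluded because rank $8$ would force three components but the grading distribution is incompatible), which is where the interplay between the rank and the grading support does the real work. The unknot step by contrast is comparatively clean once the rank bound from Batson--Seed is in hand.
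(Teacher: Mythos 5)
Your overall strategy is the same as the paper's (Lee spectral sequence for the component count and linking number, then Batson--Seed plus detection theorems for the components), but your final step has a genuine gap. You claim the Batson--Seed rank bound ``combined with the parity and size constraints'' forces both factors to have $\mathbb{F}_2$-rank $2$. It does not: the pair of ranks $(2,6)$ satisfies every constraint you impose, since $2\cdot 6=12\leq 12$ and both $2$ and $6$ are twice an odd number. So an unknot together with a trefoil (whose Khovanov homology has rank $6$ over $\mathbb{F}_2$) survives your argument, and no amount of total-rank bookkeeping can eliminate it. This is exactly where the paper has to do extra work: the Batson--Seed spectral sequence preserves the $i-j$ grading up to an overall shift, and one checks that no shift makes the $i-j$ graded ranks of $\Kh(U)\otimes\Kh(T)$ ($T$ a trefoil) agree with those of $\Kh(T(2,6),\Q)$ --- indeed they are supported in different numbers of $i-j$ gradings. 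Only this finer, graded comparison rules out the unknot-plus-trefoil case; your closing remark that ``the unknot step by contrast is comparatively clean'' is exactly backwards.

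There is also a weak point in your Lee-homology step. Your proposed exclusion of $n=1$ --- that a knot ``forces Lee homology concentrated in a single grading, contradicting the spread'' --- is not a valid inference: the spectral sequence is free to kill classes in gradings $2$ through $6$, so the spread of the $E_1$ page by itself contradicts nothing. The paper's argument is a per-grading count: Lee homology has \emph{even} rank in each homological grading (orientations come in pairs $o,\bar{o}$ lying in the same grading), and the spectral sequence gives $\rk \Lee^i(L,\Q) \leq \rk \Kh^i(L,\Q)$ for each $i$ separately; since the $\Q$-ranks of $\Kh(T(2,6))$ are $2,0,1,1,1,1,2$, Lee homology can be non-zero only in $i=0,6$, with total rank at most $4$. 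This alone still allows $n=1$, which is why the paper first establishes that the number of components is \emph{even}, using that the quantum gradings supporting $\Kh(L)$ agree mod $2$ with the number of components --- a step your proposal omits entirely. (Alternatively $n=1$ can be excluded by noting the spectral sequence preserves the alternating sum of ranks, which is $4\neq \pm 2$.) With $n=2$ forced, rank exactly $2$ in $i=0$ and $i=6$ then pins down the linking number as $3$, as you indicate.
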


\begin{prop}\label{Braided}
If $\Kh(L,\Z) \cong \Kh(T(2,6),\Z)$, then one component of $L$ is a braid in the complement of the other component.
\end{prop}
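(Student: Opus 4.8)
The plan is to apply Corollary~\ref{LFL:braids}. By Proposition~\ref{FirstProperties} I may write $L = K_1 \cup K_2$ with both components unknotted and $\mathrm{lk}(K_1,K_2) = 3 \neq 0$, so the geometric linking of $K_2$ with the unknot $U := K_1$ is nonzero and the hypotheses of Corollary~\ref{LFL:braids} are satisfied. It therefore suffices to show that $\widehat{\HFL}(L)$ has rank $2^{l-1} = 2$ in the highest (equivalently, by the symmetry of link Floer homology, the lowest) Alexander grading $a_U$ associated to $U$. The proof of Proposition~\ref{Braided}'s supporting corollary already supplies the lower bound: the rank in the extreme $a_U$ grading equals $2\,\rk\SFH(N')$ for the relevant taut sutured manifold $N'$, which is at least $2$. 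The content is the matching upper bound, and Floer-theoretic input must be imported from the Khovanov side.

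To do this I would use Dowlin's spectral sequence from pointed Khovanov homology to $\widehat{\HFK}(L)$. Since $\Kh(L,\Z) \cong \Kh(T(2,6),\Z)$ and $T(2,6)$ is alternating, the Khovanov homology of $L$ is thin, and a thin complex leaves no room for higher differentials in Dowlin's spectral sequence (the same reasoning that degenerates the Lee and Bar--Natan spectral sequences on thin complexes). As the identical degeneration holds for $T(2,6)$ itself, comparing the two yields a bigraded isomorphism $\widehat{\HFK}(L) \cong \widehat{\HFK}(T(2,6))$, whose total rank I read off from Table~\ref{KhT(2,6)} together with the reduced/unreduced relation over $\mathbb{F}_2$. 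In particular $\widehat{\HFK}(L)$ is thin, its Maslov--Alexander bigrading and its $A = a_1 + a_2$ graded ranks agree with those of $T(2,6)$ (they are symmetric, with rank $1$ in the top Alexander grading, so indeed $L$ is fibered), and the single-variable Alexander data $\Delta_L(t,t)$ is fixed.

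The main obstacle is that Corollary~\ref{LFL:braids} needs $\widehat{\HFL}(L)$ in a single component-Alexander grading, whereas Dowlin's spectral sequence only controls the collapsed theory $\widehat{\HFK}(L)$, in which $A = a_1 + a_2$ has been summed. Passing back is not formal: one can exhibit bigraded groups with the correct $A$-distribution, total rank, and symmetry but with $a_U$-support strictly larger than that of $T(2,6)$, so the collapsed data alone does not pin down the extreme $a_U$ grading. To close this gap I would pass to the multivariable Alexander polynomial. Thinness of $\widehat{\HFK}(L)$ forces $\widehat{\HFL}(L)$ onto a single $\delta$-diagonal, so the rank at each bigrading $(a_1,a_2)$ is the absolute value of the corresponding coefficient of
\[ \chi\bigl(\widehat{\HFL}(L)\bigr) = (t_1^{1/2}-t_1^{-1/2})(t_2^{1/2}-t_2^{-1/2})\,\Delta_L(t_1,t_2). \]
The quotient $\Delta_L$ is tightly constrained: the Torres conditions, using that each component is unknotted and $\mathrm{lk}(K_1,K_2)=3$, force $\Delta_L(t,1) = \Delta_L(1,t) = t + 1 + t^{-1}$ up to units; the collapse $\Delta_L(t,t)$ is already determined by $\widehat{\HFK}(L)$; and $\Delta_L$ is symmetric in $t_1 \leftrightarrow t_2$ and under $t_i \mapsto t_i^{-1}$. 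Crucially, the divisibility of $\chi(\widehat{\HFL}(L))$ by $(t_1^{1/2}-t_1^{-1/2})(t_2^{1/2}-t_2^{-1/2})$ --- equivalently $\chi(\widehat{\HFL}(L))|_{t_1 = 1} = 0$ --- is precisely the constraint that excludes the spurious large-$a_U$-support configurations, which satisfy every collapsed constraint but fail this one.

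Together with the total rank fixed in the previous step, these conditions should determine $\Delta_L = \Delta_{T(2,6)}$, and thinness upgrades this to $\widehat{\HFL}(L) \cong \widehat{\HFL}(T(2,6))$ as multi-graded groups. Then $\widehat{\HFL}(L)$ has rank exactly $2$ in its extreme grading $a_U = 3/2$, carried by the bigradings $(3/2,3/2)$ and $(3/2,1/2)$, matching the lower bound, so Corollary~\ref{LFL:braids} shows $K_2$ is a braid in the complement of $U$. I expect the delicate point to be exactly the determination of $\Delta_L$ from the Torres conditions, the collapsed data, and the total-rank bound: this is the one place where the genuine multivariable refinement, rather than its single-variable shadow, is used, and where one must rule out higher-degree corrections to $\Delta_L$.
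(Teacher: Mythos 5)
There is a genuine gap, and it sits at the very first load-bearing step: you claim that Dowlin's spectral sequence, plus thinness, yields a \emph{bigraded} isomorphism $\widehat{\HFK}(L) \cong \widehat{\HFK}(T(2,6))$, and from this you extract the $(M,A)$-graded ranks and the collapsed Alexander data $\Delta_L(t,t)$. But Dowlin's spectral sequence is not a bigraded spectral sequence: it goes from \emph{relatively} $\delta' = j-2i$ graded pointed Khovanov homology to \emph{relatively} $\delta' = 2M-2A$ graded knot Floer homology. It collapses the Maslov--Alexander bigrading to a single grading, and even that single grading is only pinned down up to an overall shift. So even granting full degeneration (which you assert by analogy with Lee/Bar--Natan rather than prove --- though this is the lesser issue, since the paper itself only uses the rank inequality), the most you can conclude is that $\widehat{\HFK}(L)$ is supported in a single $\delta$-diagonal and has rank at most $12$. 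You get no information about how that rank is distributed over Alexander gradings, hence no $A$-graded ranks, no fiberedness statement, and no $\Delta_L(t,t)$. (A second, smaller defect of the same step: the $E_1$ page is the \emph{pointed} Khovanov homology of $L$, which is not known to agree with that of $T(2,6)$ just because the ordinary Khovanov homologies agree; the paper only extracts a rank bound from it.) With $\Delta_L(t,t)$ unavailable, your Torres-condition argument loses its key input --- Torres plus symmetry alone leave infinitely many candidates, e.g.\ $\Delta_L + (t_1 - 2 + t_1^{-1})(t_2 - 2 + t_2^{-1})g(t_1,t_2)$ for symmetric $g$ --- so the determination of $\Delta_L(t_1,t_2)$, and everything downstream of it, collapses.

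This is precisely the difficulty the paper is organized around: it notes explicitly that if Dowlin's spectral sequence preserved even the \emph{absolute} $\delta'$ grading, only one case would need to be considered. Since it does not, the paper parametrizes the unknown grading ambiguity by a variable $x$, and obtains its partial bigraded information not from Dowlin (who only supplies thinness and the rank bound $\le 12$) but from a different source: the spectral sequences $\widehat{\HFL}(L) \Rightarrow \widehat{\HFL}(K_i)\otimes V$ associated to forgetting a component, which use that both components are unknots with linking number $3$ to force generators in $a_i = 3/2$, together with the symmetry $\widehat{\HFL}_{M}(L,a_1,a_2) \cong \widehat{\HFL}_{M-a_1-a_2}(L,-a_1,-a_2)$. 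It then runs a seven-case combinatorial analysis over $x$, concluding rank $2$ in the top $a_i$ grading in every case. Your proposal has no mechanism to handle this grading ambiguity, because the bigraded knowledge that would eliminate it is exactly what Dowlin's spectral sequence cannot provide. To repair your argument you would either need a genuinely bigraded refinement of Dowlin's theorem (not available), or you would need to replace the ``known $\widehat{\HFK}$'' input with constraints derived as in the paper --- at which point you are led back to essentially the paper's case analysis.
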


\begin{proof}[Proof of Theorem~\ref{thm:DetectsT(2,6)} from Propositions~\ref{FirstProperties} and~\ref{Braided}]
From Propositions~\ref{FirstProperties} and~\ref{Braided}, $L$ must be $\widehat{\beta} \cup U$ where $\beta$ is a 3-braid whose closure is an unknot and $U$ is the braid axis.

Up to isotopy in the complement of the braid axis, there are only three possible 3-braids whose closures are the unknot, $\sigma_1 \sigma_2$, $\sigma_1^{-1} \sigma_2^{-1}$ and $\sigma_1\sigma_2^{-1}$ so $L$ must be one of these braids together with its braid axis. The first two possibilities both represent $T(2,6)$. The final possibility using the braid $\sigma_1\sigma_2^{-1}$ gives the link L6a2 and $\Kh(\textrm{L6a2},\Z) \not \cong \Kh(T(2,6),\Z)$ because they have different ranks~\cite{livingston_linkinfo_nodate}.
\end{proof}

\begin{proof}[Proof of Proposition~\ref{FirstProperties}]
The fact that $\Kh(L,\Z) \cong \Kh(T(2,6),\Z)$ means that $\Kh(L,\Z)$ is supported in even quantum gradings and so $L$ has an even number of components because the non-zero quantum gradings of $\Kh(L,R)$ agrees mod 2 with the number of components of $L$.

The Lee homology of $L$ has even rank in each homological grading and has total rank $2^n$ where $n$  is the number of components of $L$. So then rank inequalities from the spectral sequence between Khovanov homology and Lee homology show that $L$ has exactly two components because there are only two homological gradings where the rank of Khovanov homology is more than 1 and in each of these gradings the rank is exactly 2. Furthermore, these homological gradings are $i = 0$ and $i = 6$ so the linking number of the two components is 6/2 = 3~\cite[Proposition~4.3]{lee_endomorphism_2005}.

Considering the Batson-Seed spectral sequence over $\mathbb{F}_2$ from $\Kh(L)$ to $\Kh(L')$ where $L'$ is the split link comprised of the two components of $L$~\cite[Theorem~1.1]{batson_link-splitting_2015}. The total rank of $\Kh(L,\mathbb{F}_2)$ is 12 and the total rank of $\Kh(L')$ is the product of the ranks of the Khovanov homology of the two components. Additionally, over $\mathbb{F}_2$, the rank of Khovanov homology of a knot over $\mathbb{F}_2$ must be twice an odd number because it is twice the rank of reduced Khovanov homology over $\mathbb{F}_2$ which always has odd rank for a knot. 

Then the only possible ranks for the Khovanov homologies of the components of $L$ are 2 and 6. Then the only possibilities for the components of $L$ are either two unknots or an unknot and a trefoil because the unknot is the only knot whose Khovanov homology has rank two over $\mathbb{F}_2$~\cite[Theorem 1.1]{kronheimer_khovanov_2011} and the trefoil is the only knot whose Khovanov homology has rank 6 over $\mathbb{F}_2$~\cite[Theorem~1.4]{baldwin_khovanov_2018}. Examining the rank of Khovanov homology of $L$ over $\Q$ in each $i - j$ grading, which is preserved by the Batson-Seed spectral sequence up to an overall shift, rules out the possibility that one of the components of $L$ is a trefoil because there is no overall shift possible to make the ranks agree with the ranks in $i-j$ gradings of the tensor product $\Kh(U) \otimes \Kh(T)$ where $U$ and $T$ are the unknot and trefoil respectively. \end{proof}

\begin{proof}[Proof of Proposition~\ref{Braided}] 
To show that one component of $L$ is braided with respect to the other, we will use the spectral sequence from the pointed Khovanov homology of $L$ to a singly graded version of knot Floer homology constructed by Dowlin~\cite{dowlin_spectral_2018}. In this proof, we will use $\delta'$ to refer to both grading Khovanov homology by $j - 2i$ and grading knot Floer homology by $2M - 2A$. We will use $\delta$ to refer to grading knot Floer homology by $A - M$.

From knowing $\Kh(L,\Z)$ we can see that the reduced Khovanov homology of $L$ over $\mathbb{F}_2$ is rank 6. So then the reduced Khovanov homology of $L$ over $\Q$ has rank no greater than 6. Because $L$ is a 2 component link, the pointed Khovanov homology $\Kh(L , \textbf{p} )$ over $\Q$ has rank no greater than 12 and the fact that $L$ is Khovanov thin implies that $ \Kh(L , \textbf{p} )$ is supported in a single $\delta' = j - 2i$ grading~\cite[Lemma~2.11]{baldwin_khovanov_2017}.

The Dowlin spectral sequence preserves the relative $\delta'$ grading so $\widehat{\HFK}(L)$ is supported in a single $\delta = - 1/2 \delta ' $ grading.

Now we consider $\widehat{\HFL}(L)$ in order to show that one component is a braid in the complement of the other component. By Corollary~\ref{LFL:braids}, we want to show that in the top non-zero grading of either $a_1$ or $a_2$ the rank of $\widehat{\HFL}(L)$ is exactly two.

The fact that each of the two components of $L$ is an unknot and the existence of the spectral sequence from $\widehat{\HFL}(L = K_1 \cup K_2)$ to $\widehat{\HFL}(K_1) \otimes V$ implies that $\widehat{\HFL}(L)$ has rank at least 1 in the gradings $M = 0 , a_1 = 3/2$ and $M = - 1 , a_1 = 3/2$ where each generator sits in some unknown $a_2$ grading~\cite[Lemma~2.4]{baldwin_categorified_2015}. Similarly $\widehat{\HFL}(L)$ has rank at least 1 in the gradings $M = 0 , a_2 = 3/2$ and $M = - 1 , a_2 = 3/2$ where each generator sits in some unknown $a_1$ grading. The fact that $\widehat{\HFL}(L)$ is supported in a single $\delta= a_1 + a_2 - M - 1/2$ grading allows us to write the unknown gradings in terms of a variable $x$. There is at least one generator in $(M , a_1 , a_2)$ gradings $(0, 3/2 , x)$ and $(-1 , 3/2 , x-1)$, where these generators survive in the spectral sequence induced by $a_2$. Also there is at least one generator in  $(0, x, 3/2 )$ and $(-1  , x-1, 3/2 )$, where these generators survive in the spectral sequence induced by $a_1$.

The symmetry of $\widehat{\HFL}(L)$~\cite[Proposition~8.2]{ozsvath_holomorphic_2008} then tells us that $\widehat{\HFL}(L)$ also has rank at least 1 in the following four gradings $(-3 - 2x , -3/2 , -x )$, $(-3 - 2x+1 , -3/2 , -x+1  )$, $(-3 - 2x, -x , -3/2  )$, $(-3 - 2x+1 , -x+1 , -3/2  )$.

From here the proof breaks into seven cases depending on the value of $x$. There is a case where $x > 5/2$, a case where $x < -3/2$ and a case for each of the following values $5/2 , 3/2 , 1/2, -1/2 , -3/ 2$. For each case we deduce that one component of $L$ is a braid in the complement of the other component.

We first address the case $x = 3/2$ which is the case that occurs if $L = T(2,6)$. If the Dowlin spectral sequence was known to preserve the absolute $\delta'$ grading then this would be the only case that needed to be considered.

\emph{The case $x = 3/2$} 

Setting $x = 3/2$ we have that $\widehat{\HFL}(L)$ has rank at least 1 in the tri-gradings $( 0 ,3/2,  3/2)$, $(- 1 ,  1/2, 3/2)$ and $(- 1 ,  3/2, 1/2)$ and these generators survive in one or both of the spectral sequences induced by $a_i$. Additionally, $\widehat{\HFL}(L)$ also has rank at least 1 in gradings $(-6 ,  -3/2, -3/2)$, $(-5,  -3/2, -1/2)$,  and $(-5, -1/2,  -3/2)$ and these generators do not survive in either spectral sequence. The partial complex with these six generators is shown in Figure~\ref{6gens}.

\begin{figure}
\centering{
\begin{overpic}[width= 0.4\textwidth]{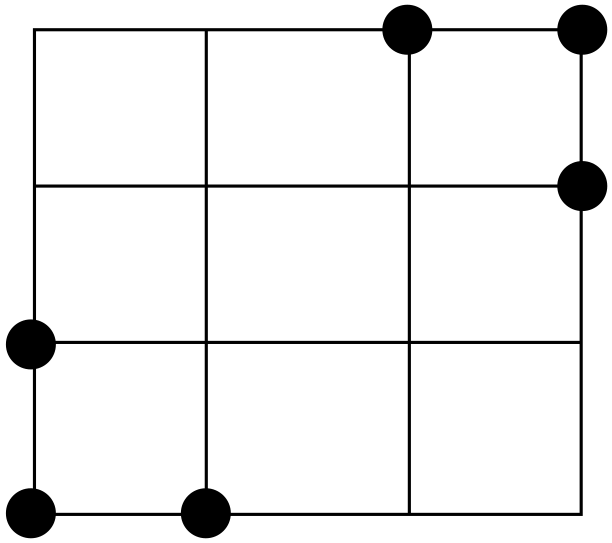}
\put(85,100){$a_1 = 3/2$}
\put(55,100){$a_1 = 1/2$}
\put(20,100){$a_1 = -1/2$}
\put(-15, 100){$a_1 = -3/2$}
\put(-27, 84){$a_2 = 3/2$}
\put(-27, 56){$a_2 = 1/2$}
\put(-30, 28){$a_2 = -1/2$}
\put(-30, 0){$a_2 = -3/2$}
\end{overpic}
\caption{A partial $\widehat{\HFL}(L)$ complex when $x = 3/2$ with 6 generators. The dots represent bi-degrees where the partial complex has rank one.}\label{6gens}
}
\end{figure}

At this point, there are at most 6 more generators we can add to construct a possible $ \widehat{\HFL}(L)$. When we have finished adding generators to a possible $ \widehat{\HFL}(L)$, the end result must have even rank in every $a_1$ grading and every $a_2$ grading, otherwise it is impossible to have spectral sequences to $\widehat{\HFL}(K_1) \otimes V$ and $\widehat{\HFL}(K_2) \otimes V$.  Additionally, it must have the symmetry that $\widehat{\HFL}_{M}(L, a_1 , a_2 ) \cong \widehat{\HFL}_{M - a_1 - a_2 }(L, -a_1 , -a_2 ) $~\cite[Proposition~8.2]{ozsvath_holomorphic_2008}, and it must be possible to add differentials changing the Maslov index by 1 so that there are spectral sequences  induced by each $a_i$ grading with the $E_\infty$ pages mentioned in the previous paragraph.

If we add generators with $a_i$ grading larger than $3/2$ then the above requirements about even parity in every grading and symmetry mean that we must add exactly two generators in that grading (which is now the top grading) and so one of the components is a braid in the complement of the other. So now we can assume that we only add generators whose $a_i$ gradings are less than or equal to $3/2$ in absolute value. 

Having the appropriate $E_\infty$ pages of the two spectral sequences now means that there must be a generator in grading $M = -4 , a_1 = -1/2, a_2 = -1/2$ and one in grading $M = -2, a_1 = 1/2, a_2 = 1/2$. The partial complex with these generators is shown in Figure~\ref{8gens}.

\begin{figure}
\centering{
\begin{overpic}[width= 0.4\textwidth]{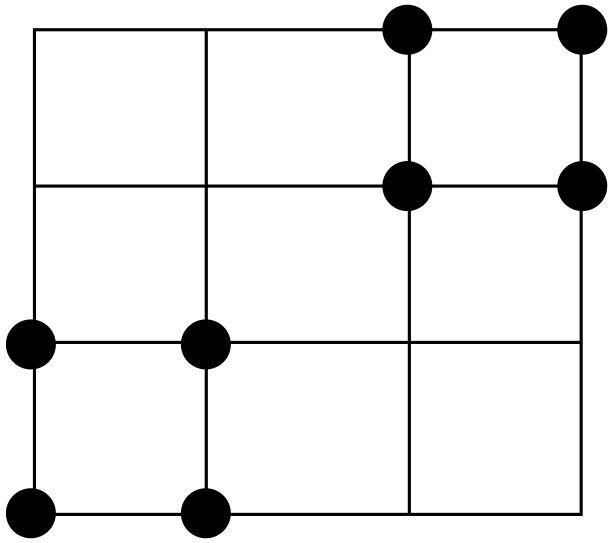}
\put(85,100){$a_1 = 3/2$}
\put(55,100){$a_1 = 1/2$}
\put(20,100){$a_1 = -1/2$}
\put(-15, 100){$a_1 = -3/2$}
\put(-27, 84){$a_2 = 3/2$}
\put(-27, 56){$a_2 = 1/2$}
\put(-30, 28){$a_2 = -1/2$}
\put(-30, 0){$a_2 = -3/2$}
\end{overpic}
\caption{A partial $\widehat{\HFL}(L)$ complex when $x = 3/2$ with 8 generators if every generator has $a_i$ grading no greater than $  3/2$ in absolute value. The dots represent bi-degrees where the partial complex has rank one.}\label{8gens}
}
\end{figure}

Now there are at most 4 more generators to add. If the end result will have that neither component is a braid in the complement of the other then two those generators must be added at either $a_1 = 3/2, a_2 = -3/2$ or  $a_1 = 3/2, a_2 = 3/2$ and the other two are then added in the appropriate place for symmetry. There are three possible ways to add the two generators, either both in  $a_1 = 3/2, a_2 = -3/2$, both in $a_1 = 3/2, a_2 = 3/2$, or one in $a_1 = 3/2, a_2 = -3/2$ and the other in $a_1 = 3/2, a_2 = 3/2$. For each of these ways of adding generators, it is impossible to add differentials that give the desired $E_\infty$ pages of both spectral sequences. So then the link $L$ must have that one of its components is a braid in the complement of the other.

\emph{The case $x = 5/2$ }

Setting $x = 5/2$ we have that $\widehat{\HFL}(L)$ has rank at least 1 in the tri-gradings $( 0, 5/2,  3/2)$, $(0 ,  3/2, 5/2)$ and $(- 1 ,  3/2, 3/2)$ and these generators survive in one or both of the spectral sequences induced by $a_i$. Additionally, $\widehat{\HFL}(L)$ also has rank at least 1 in gradings $(-6 ,  -5/2, -3/2)$, $(-6,  -3/2, -5/2)$,  and $(-5, -3/2,  -3/2)$ and these generators do not survive in either spectral sequence. The partial complex with these six generators is shown in Figure~\ref{xfivehalf6gens}.

\begin{figure}
\centering{
\begin{overpic}[width= 0.4\textwidth]{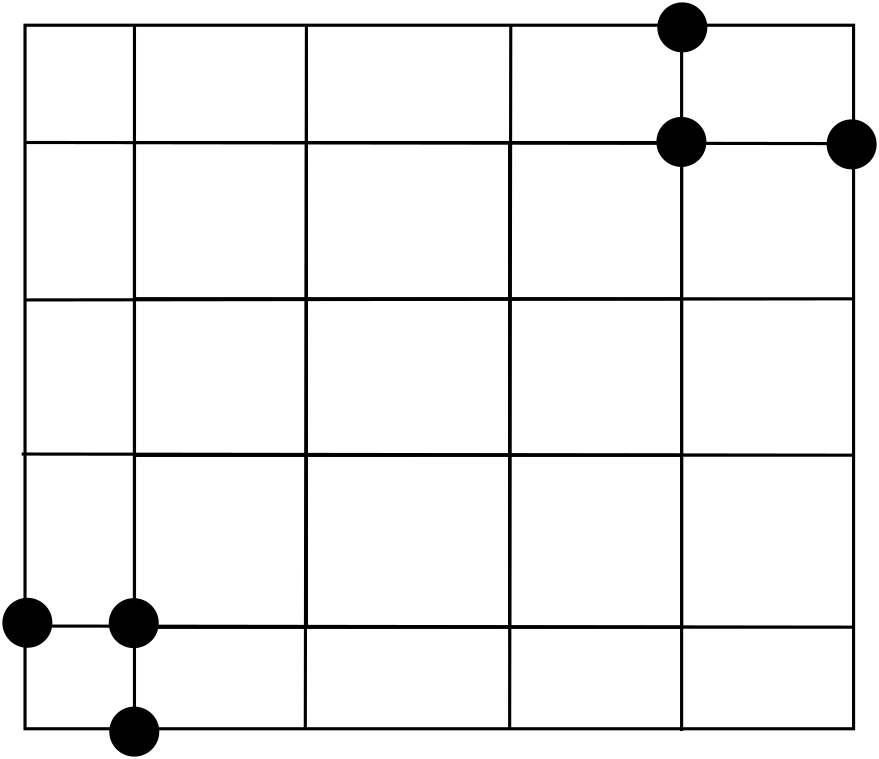}
\put(85,90){$a_1 = 5/2$}
\put(-15, 90){$a_1 = -5/2$}
\put(-27, 80){$a_2 = 5/2$}
\put(-27, 68){$a_2 = 3/2$}
\put(-27, 50){$a_2 = 1/2$}
\put(-30, 32){$a_2 = -1/2$}
\put(-30, 13){$a_2 = -3/2$}
\put(-30, 0){$a_2 = -5/2$}
\end{overpic}
\caption{A partial $\widehat{\HFL}(L)$ complex when $x = 5/2$ with 6 generators. The dots represent bi-degrees where the partial complex has rank one. The vertical columns represent $a_1$ gradings with grading increasing by 1 from $-5/2$ to $5/2$.}\label{xfivehalf6gens}
}
\end{figure}

Generators must be added to the gradings $a_i = \pm 5/2$ to allow for the desired spectral sequences to exist. There are two possible ways to do this while preserving the symmetry, the first is adding generators at the $(a_1 , a_2)$ gradings $(5/2, 1/2)$ and $(1/2 , 5/2)$. Then two more generators must be added to preserve symmetry leaving a complex with 10 generators and only being able to add up to two additional generators. The partial complex has exactly two generators in the maximal $a_i$ grading for $i = 1,2$. There is no way to add the two additional generators in a way that increases the rank in these maximal gradings while maintaining the needed spectral sequences and symmetry.

The second possibility is adding a generator in the $(a_1 , a_2)$ grading $(5/2 , 5/2)$ and adding on in the grading $(-5/2 , -5/2)$ to preserve the symmetry. The partial complex with these generators is shown in Figure~\ref{xfivehalf8gens}. At this point there are eight generators in the complex and up to four more that can be placed.

If we add generators with $a_i$ grading larger than $5/2$ then the requirements about even parity in every grading and symmetry mean that we must add exactly two generators in that grading (which is now the top grading) and so one of the components is a braid in the complement of the other. So now we can assume that we only add generators whose $a_i$ gradings are less than or equal to $5/2$ in absolute value.

Now there are at most 4 more generators to add. If the end result will have that neither component is a braid in the complement of the other then two those generators must be added at either $a_1 = 5/2, a_2 = -5/2$ or  $a_1 = 5/2, a_2 = 5/2$ and the other two are then added in the appropriate place for symmetry. There are three possible ways to add the two generators, either both in  $a_1 = 5/2, a_2 = -5/2$, both in $a_1 = 5/2, a_2 = 5/2$, or one in $a_1 = 5/2, a_2 = -5/2$ and the other in $a_1 = 5/2, a_2 = 5/2$. For each of these ways of adding generators, it is impossible to add differentials that give the desired $E_\infty$ pages of both spectral sequences. So then the link $L$ must have that one of its components is a braid in the complement of the other.

\begin{figure}
\centering{
\begin{overpic}[width= 0.4\textwidth]{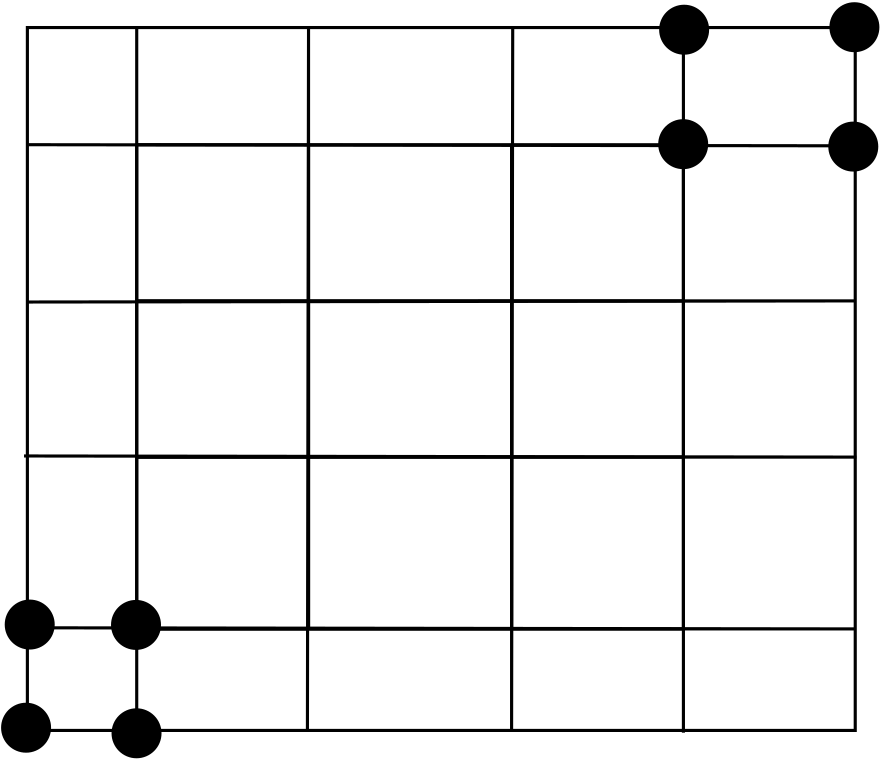}
\put(85,90){$a_1 = 5/2$}
\put(-15, 90){$a_1 = -5/2$}
\put(-27, 80){$a_2 = 5/2$}
\put(-27, 68){$a_2 = 3/2$}
\put(-27, 50){$a_2 = 1/2$}
\put(-30, 32){$a_2 = -1/2$}
\put(-30, 13){$a_2 = -3/2$}
\put(-30, 0){$a_2 = -5/2$}
\end{overpic}
\caption{A partial $\widehat{\HFL}(L)$ complex when $x = 5/2$ with 8 generators. The dots represent bi-degrees where the partial complex has rank one. The vertical columns represent $a_1$ gradings with grading increasing by 1 from $-5/2$ to $5/2$.}\label{xfivehalf8gens}
}
\end{figure}

\emph{The case $ x = 1/2$}

Setting $x = 1/2$ we have that $\widehat{\HFL}(L)$ has rank at least 1 in the tri-gradings $( 0 ,3/2,  1/2)$, $( 0 ,1/2,  3/2)$, $(- 1 ,  -1/2, 3/2)$ and $(- 1 ,  3/2, -1/2)$ and these generators survive in one of the spectral sequences induced by $a_i$. Additionally, $\widehat{\HFL}(L)$ also has rank at least 1 in gradings $( -4 , -3/2,  -1/2)$, $( -4 ,-1/2,  -3/2)$, $(- 3 ,  1/2, -3/2)$ and $(- 3 ,  -3/2, 1/2)$  and these generators do not survive in either spectral sequence. The partial complex with these six generators is shown in Figure~\ref{xonehalf8gens}.

If we add generators with $a_i$ grading larger than $3/2$ then the requirements about even parity in every grading and symmetry mean that we must add exactly two generators in that grading (which is now the top grading) and so one of the components is a braid in the complement of the other. So now we can assume that we only add generators whose $a_i$ gradings are less than or equal to $3/2$ in absolute value. 

Having the appropriate $E_\infty$ pages of the two spectral sequences now means that there must be a generator in each of the following four gradings $(-3, -1/2 , -1/2 )$, $(-1, 1/2 , 1/2 )$, $(-2, 1/2 , -1/2 )$, and $(-2, -1/2 , 1/2 )$. After adding these four generators the rank of the complex is 12 and there are no more generators to add. There are exactly two generators in the top $a_1$ grading and so one of the components is a braid in the complement of the other.

\begin{figure}
\centering{
\begin{overpic}[width= 0.4\textwidth]{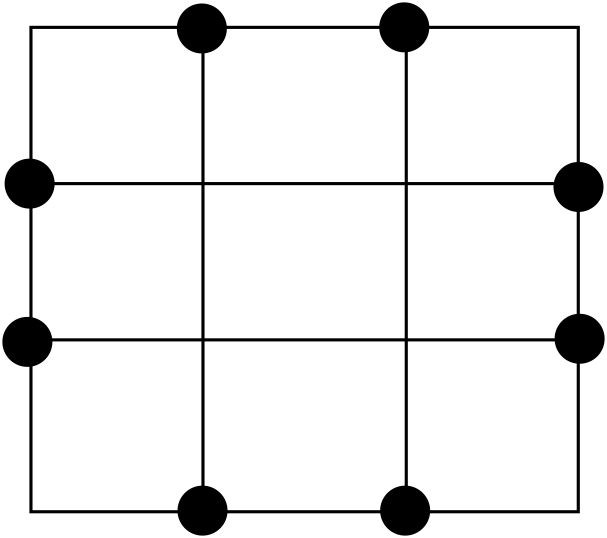}
\put(85,100){$a_1 = 3/2$}
\put(55,100){$a_1 = 1/2$}
\put(20,100){$a_1 = -1/2$}
\put(-15, 100){$a_1 = -3/2$}
\put(-27, 84){$a_2 = 3/2$}
\put(-27, 56){$a_2 = 1/2$}
\put(-30, 28){$a_2 = -1/2$}
\put(-30, 0){$a_2 = -3/2$}
\end{overpic}
\caption{A partial $\widehat{\HFL}(L)$ complex when $x = 1/2$ with 8 generators. The dots represent bi-degrees where the partial complex has rank one.}\label{xonehalf8gens}
}
\end{figure}

\emph{The case $x > 5/2$}

When $x > 5/2$ the tri-gradings $(0, 3/2 , x)$, $(-1 , 3/2 , x-1)$, $(0, x, 3/2 )$, $(-1  , x-1, 3/2 )$, $(-3 - 2x , -3/2 , -x )$, $(-3 - 2x+1 , -3/2 , -x+1  )$, $(-3 - 2x, -x , -3/2  )$,  and $(-3 - 2x+1 , -x+1 , -3/2  )$ are all distinct and so represent eight different generators of $\widehat{\HFL}(L)$. The only way to add four generators  so that all the $a_i$ gradings have even rank are by adding two at $(a_1 , a_2)$ gradings $(x,x)$ and $(x - 1 , x-1)$ or at $(x , x-1)$ $(x - 1 , x)$. The Maslov gradings are determined by $\widehat{\HFL}(L)$ being supported in a single $\delta$ grading. The other two generators are then added to the appropriate gradings to maintain symmetry. After adding these four generators the rank of the complex is 12 and there are no more generators to add. There are exactly two generators in the top $a_1$ grading and so one of the components is a braid in the complement of the other.

\emph{All other cases}

The arguments to show braidedness in the remaining cases are almost identical to the cases shown and are not repeated. The argument for the case $x<-3/2$ is similar to the case $x> 5/2$. The argument for the case $x= -3/2$ is similar to the case $x=  5/2$. Finally the argument for the case $x=-1/2$ is similar to the case $x= 3/2$. 
\end{proof}

\bibliography{KHDetection}

\end{document}